\numberwithin{equation}{subsection}
\DeclareMathOperator{\End}{End}
\theoremstyle{plain}
\newtheorem{theorem}{Theorem}[subsection]
\newtheorem{lemma}[theorem]{Lemma}
\theoremstyle{definition}
\newtheorem{rem}[theorem]{Remark}
\def\ge{\geqslant}
\def\le{\leqslant}
\def\a{\alpha}
\def\b{\beta}
\def\g{\gamma}
\def\d{\delta}
\def\L{\Lambda}
\def\e{\epsilon}
\def\s{\sigma}
\def\th{\theta}
\def\l{\lambda}
\def\i{^{-1}}
\def\ZZ{\mathbb Z}
\def\QQ{\mathbb Q}
\def\RR{\mathbb R}
\def\FF{\mathbb F}
\def\ci{\mathcal I}
\def\cu{\mathcal U}
\def\tW{\tilde W}
\def\tw{\tilde w}
\def\ba{\mathbf a}
\newcommand{\kk}{\Bbbk}
\def\<{\langle}
\def\>{\rangle}
\def\tphi{\tilde \phi}
\def\Hom{\rm{Hom}}
\def\tPhi{\tilde \Phi}
\def\te{\tilde e}
\def\h{\rm{h}}
\def\bc{\bar c}
\def\ba{\bar a}
\begin{document}

\title[]{On isomorphism numbers of ``$F$-crystals"}
\author{Sian Nie}
\address{Max-Planck Institute for Mathematics, Vivatsgasse 7, 53111 Bonn}
\email{niesian@amss.ac.cn}

\begin{abstract}
In this note, we show that for an ``$F$-crystal" (the equal characteristic analogue of $F$-crystals), its {\it isomorphism number} and its {\it level torsion} coincide. This confirms a conjure of Vasiu \cite{Va} in the equal characteristic case.
\end{abstract}

\maketitle
\section{Introduction}
\subsection{} Let $\kk$ be an algebraically closed field of characteristic $p>0$ and $D$ a $p$-divisible group over $\kk$. It is well-known that there exists a nonnegative integer $n$ such that the isomorphic class of $D$ is determined by the finite truncation $D[p^n]$ of $D$. We call the smallest integer with such property the {\it isomorphism number} of $D$ and denote it by $n_D$. An interesting problem is to compute this number using invariants of the isomorphism class of $D$. In \cite{Va}, Vasiu introduced the {\it level torsion $\ell_D$} of $D$ by considering its associated Dieudonn\'{e} module and showed that $n_D \le \ell_D$. Actually Vasiu proved this inequality for all {\it $F$-crystals} and moreover, the equality hold if the $F$-crystal is a direct sum of {\it isoclinic} ones. He also conjectured that the equality holds for all $F$-crystals. In \cite{LNV}, Lau, Nicole and Vasiu discovered an optimal upper bound for $n_D$ in terms of the Hodge polygon and the Newton polygon of $D$. They also proved that the equality $n_D=\ell_D$ always holds. Recently, using level torsion numbers, Xiao \cite{X1} provided an efficiently computable upper bound for isomorphism numbers of isoclinic $F$-crystals.

The main purpose of this paper is to prove Vasiu' conjecture on the equality of the isomorphic number and level torsion for an {\it``$F$-crystal"} (see \S\ref{def}), defined using the ring $\kk[[\e]]$ of formal power series instead of the ring $W(\kk)$ of Witt vectors over $\kk$.

\subsection{}\label{def} Let $\kk((\e))$ be the field of formal Laurent series over $k$ and $\kk[[\e]]$ its ring of formal power series. Let $q$ be a power of $p$. We define the Frobenius automorphism $\s$ of $\kk((\e))$ by $\s(\sum a_n \e^n) =\sum a_n^q \e^n$, where each $a_n \in \kk$.

Let $M$ be a free $\kk[[\e]]$-module of finite rank. Let $\phi$ be a $\s$-linear isomorphism of $V=M \otimes_{\kk[[\e]]} \kk((\e))$. We call the pair $(V, \phi)$ an {\it ``$F$-isocrystal"} over $\kk$. Moreover, if $\phi(M) \subset M$, we call the pair $(M, \phi)$ an {\it ``$F$-crystal"} over $\kk$. Its {\it isomorphism number}, denoted by $n_M$, is the smallest nonnegative integer $n$ satisfying the property that for any $h \in GL(M)$ with $h \equiv 1 \mod \e^n$, we have $(M, h \phi)$ is isomorphic to $(M, \phi)$, that is, $h \phi=g\i \phi g$ for some $g \in GL(M)$.

In the above definitions, if we replace $\kk[[\e]]$ and $\kk((\e))$ by the ring of Witt vectors over $\kk$  and its fractional field respectively, then we obtain the notions of $F$-isocrystals and $F$-crystals.

Let $\tphi$ be the $\s$-linear automorphism of $\End(V)$ defined by $f \mapsto \phi f \phi\i$. We see that $(\End(V), \tphi)$ is an ``$F$-isocrystal".

By Dieudonn\'{e}'s classification on {\it ``$F$-isocrystals"}, we have a decomposition $V=\oplus_{\l} V^{\l}$, where each $V^{\l}$ is a $\phi$-stable $\kk((\e))$-vector subspace such that $(V^{\l}, \phi|_{V^{\l}})$ is an ``$F$-isocrystal" of {\it slope} $\l \in \QQ$. Then $\End(V)=V_+ \oplus V_0 \oplus V_-$, where $V_+=\bigoplus_{\l_1 < \l_2} \Hom(V^{\l_1}, V^{\l_2})$, $V_0=\bigoplus_{\l} \Hom(V^{\l}, V^{\l})$ and $V_-=\bigoplus_{\l_1 < \l_2} \Hom(V^{\l_2}, V^{\l_1})$.

Let $H$ be a lattice of $\End(V)$. Let $O_{H,+} \subset V_+ \cap H$, $O_{H,0} \subset V_0 \cap H$ and $O_{H,-} \subset V_- \cap H$ be the maximal $\kk[[\e]]$-modules  such that $\tphi(O_{H,+}) \subset O_{H,+}$, $\tphi(O_{H,0}) = O_{H,0}$ and $\tphi\i(O_{H,-}) \subset O_{H,-}$. Define $$O_H=O_{H,+} \oplus O_{H,0} \oplus O_{H,-}.$$

Following \cite{Va}, if $H=\End(M)$, we call $O_H$ the {\it level module} of $(M, \phi)$ and define the {\it level torsion} $\ell_M$ of $(M, \phi)$ by the following two rules:

(a) $\ell_M=1$ if $H=\End(M)=O_H$ and $O_{H,+} \oplus O_{H,-}$ is not topologically nilpotent;

(b) $\ell_M=\min\{n \in \ZZ_{\ge 0}; \e^n H \subset O_H\}$ otherwise.

\subsection{} The main result of this note is the following.
\begin{theorem}\label{main}
For any ``$F$-crystal" $(M, \phi)$ we have $n_M \ge \ell_M$.
\end{theorem}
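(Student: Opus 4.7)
The plan is to prove the bound by producing, for each integer $n$ with $0 \le n < \ell_M$, an element $h \in GL(M)$ congruent to $1$ modulo $\e^n$ such that $(M, h\phi)$ is not isomorphic to $(M, \phi)$; this forces $n_M > n$ for every such $n$, hence $n_M \ge \ell_M$. Under rule (b) in the definition of $\ell_M$, the inequality $n < \ell_M$ gives $\e^n H \not\subset O_H$, so I can choose $u \in H = \End(M)$ with $\e^n u \notin O_H$ and set $h := 1 + \e^n u$, which automatically lies in $GL(M)$ for $n \ge 1$; the extremal case governed by rule (a) will be treated by a parallel argument exploiting the failure of $O_{H,+} \oplus O_{H,-}$ to be topologically nilpotent.

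Any isomorphism between $(M, h\phi)$ and $(M, \phi)$ is given by some $g \in GL(M)$ with $g^{-1}\phi g = h\phi$, equivalently
\[
\tphi(g) - g = g \cdot \e^n u.
\]
Assuming for contradiction that such $g$ exists, my goal is to conclude $\e^n u \in O_H$, contradicting the choice of $u$. I project this identity onto the three pieces of the slope decomposition $\End(V) = V_+ \oplus V_0 \oplus V_-$. On $V_+$, the positive slope of $\tphi$ makes it topologically nilpotent on the maximal stable lattice $O_{H,+}$, so $\tphi - \id$ is invertible there via a convergent geometric series; symmetrically on $V_-$, using that $\tphi^{-1}$ is topologically nilpotent on $O_{H,-}$. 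Combining these with a step-by-step construction of $g$ order by order in $\e$ should let me control the $V_\pm$-components of $g - 1$ and force the $V_\pm$-components of $\e^n u$ into $O_{H,\pm}$.

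The main obstacle is the $V_0$ contribution, where $\tphi$ preserves $O_{H,0}$ as an automorphism but is not topologically nilpotent, so the Neumann inversion of $\tphi - \id$ is unavailable. Here the plan is to further decompose $V_0 = \bigoplus_\lambda \End(V^\lambda)$ and appeal, on each isoclinic summand, to the equality $n = \ell$ that Vasiu has already established for isoclinic $F$-crystals; the transfer of his argument from $W(\kk)$ to $\kk[[\e]]$ should be routine because $\s$ fixes $\e$, considerably simplifying the Frobenius-linear algebra. A concomitant technical point is that the slope decomposition is not multiplicative on $\End(V)$, so cross-terms in the product $g \cdot \e^n u$ mixing the three pieces $V_+, V_0, V_-$ must be tracked carefully during the iterative construction of $g$; keeping this bookkeeping under control while simultaneously preserving the control obtained on each slope component is the most delicate part of the argument.
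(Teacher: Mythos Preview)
Your contrapositive is the right direction, but the execution has a real gap at the linearization step. You write down the nonlinear equation $\tphi(g)-g=g\cdot\e^n u$ and immediately start projecting to $V_+,V_0,V_-$ and inverting $\tphi-\id$ by geometric series. That inversion solves the \emph{linear} problem $(\tphi-1)(y)=x$ on $O_{H,\pm}$; it does not let you pass from the nonlinear equation to a linear one. The right-hand side contains $g$ itself, and $g\in GL(M)$ need not be close to $1$ --- it can have nontrivial image in the Weyl group --- so the ``order-by-order'' remark does not apply (you are not constructing $g$, you are given it). In the paper this step is Lemma~\ref{first}, and its proof is not soft: one first normalizes $\phi\in K_1 x\s$ with $x\in\tW{}^S$, pins the possible $g$ down to a set $\L\,\cu_J$ with $\L$ finite (Lemma~\ref{f}), and then runs an induction along a Moy--Prasad filtration of $H$, using that $\tphi-1$ takes algebraically closed admissible subsets to such (Lemma~\ref{closed}) together with irreducibility of the graded pieces $V\<r\>$. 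None of this structure appears in your sketch, and the cross-term issue you flag is exactly what this machinery is designed to absorb.

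Your plan for $V_0$ is also not workable as stated. Vasiu's isoclinic equality concerns $n_M=\ell_M$ for an isoclinic $(M,\phi)$; it says nothing directly about solving $(\tphi-1)(y)=x$ on the slope-zero part of $\End(V)$ when $V$ is not isoclinic, and the induced lattice $H\cap V_0$ need not be $\End$ of any lattice, so there is no isoclinic crystal to hand the problem to. In the paper the slope-zero obstruction is handled instead by Lemma~\ref{sec}: one varies $h\mapsto rh$ over all $r\in\kk$ (using that \emph{all} of $\e^n H$ lies in $(\tphi-1)H$, not just one element), extracts a family of additive polynomial equations in $r$, and shows via an $\FF_q$-linear-independence argument (Lemma~\ref{teq}) that these have only finitely many solutions unless $h\in O_H$, contradicting $|\kk|=\infty$. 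This replaces the unavailable Neumann inversion on $V_0$, and your outline has no analogue of it.
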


Combining with Vasiu's inequality $n_M \le \ell_M$ in \cite{Va}, we confirm Vasiu's conjecture for ``$F$-crystals".
\begin{theorem}\label{main'}
For any ``$F$-crystal" $(M, \phi)$ we have $n_M = \ell_M$.
\end{theorem}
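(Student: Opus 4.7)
The hard direction is $n_M \ge \ell_M$, i.e.\ Theorem~\ref{main}; the reverse $n_M \le \ell_M$ is cited from Vasiu~\cite{Va}, and its proof transfers verbatim to the equal-characteristic setting. I would prove $n_M \ge \ell_M$ by contraposition: supposing that every $h \in GL(M)$ with $h \equiv 1 \pmod{\e^n}$ satisfies $(M, h\phi) \cong (M, \phi)$, deduce $\e^n \End(M) \subseteq O_H$ (with $H = \End(M)$), which by definition forces $n \ge \ell_M$.

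By definition of isomorphism the hypothesis says that every $h \in 1 + \e^n \End(M)$ lies in the Lang image $\{g^{-1}\tphi(g) : g \in GL(M)\}$. Given $X \in \e^n \End(M)$, writing $1 + X = g^{-1}\tphi(g)$ and rearranging gives
\begin{equation*}
\tphi(g) - g = gX.
\end{equation*}
A Newton-type iteration (refining $g$ modulo increasing powers of $\e$, which converges because $g \mapsto \tphi(g) - g$ is $\e$-adically contracting on the congruence subgroup for sufficiently high $n$) then lets one choose $g = 1 + Y$ with $Y \in \End(M)$, reducing the equation to its additive form $(\tphi - \id)(Y) = X$ inside $\End(V)$.

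Now decompose slopewise: since $\End(V) = V_+ \oplus V_0 \oplus V_-$ is $\tphi$-stable, the equation splits into three pieces. On $V_+$, $\tphi$ is topologically nilpotent, so the unique solution is $Y_+ = -\sum_{i \ge 0} \tphi^i(X_+)$; this lies in $\End(M)$ if and only if each $\tphi^i(X_+)$ does, which is precisely the condition $X_+ \in O_{H,+}$ by the maximality defining that lattice. Symmetrically on $V_-$, the unique solution $Y_- = \sum_{i \ge 1} \tphi^{-i}(X_-)$ forces $X_- \in O_{H,-}$. On $V_0$, where $\tphi$ is a slope-$0$ automorphism, the Dieudonn\'e--Manin classification diagonalizes $\tphi$ after an $\FF_q((\e))$-base change; the obstruction to solving $(\tphi-\id)(Y_0) = X_0$ with $Y_0$ in a prescribed lattice is an $\FF_q((\e))$-linear invariant that vanishes exactly when $X_0 \in O_{H,0}$. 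Summing the three cases yields $X \in O_H$, completing the contraposition.

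I expect the main obstacle to be the integral bookkeeping: the slope decomposition is only $\kk((\e))$-linear, so the pieces $X_+, X_0, X_-$ of a lattice element $X \in \End(M)$ need not themselves lie in integral lattices, and the Newton iteration must carefully track the denominators introduced by the slope projections. The subtlest point is the slope-$0$ component: translating ``solvability of $(\tphi-\id)(Y_0) = X_0$ with $Y_0$ in a prescribed lattice'' into ``$X_0 \in O_{H,0}$'' requires the full force of the maximality defining $O_{H,0}$, and is parallel to the refined lattice-theoretic step used in \cite{LNV} for the mixed-characteristic setting.
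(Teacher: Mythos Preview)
Your overall architecture matches the paper's two-step plan: linearize the multiplicative hypothesis $K_n \subset \{g^{-1}\tphi(g): g \in K\}$ to the additive one $\e^n H \subset (\tphi - 1)H$ (the paper's Lemma~\ref{first}), then pass from that to $\e^n H \subset O_H$ (Lemma~\ref{sec}). But both of your reductions have genuine gaps. For the linearization, from $1 + X = g^{-1}\tphi(g)$ you correctly get $\tphi(g) - g = gX$, so $gX \in (\tphi - 1)H$; but $g$ depends on $X$, and your Newton iteration to strip off the $g$ does not converge: the map $g \mapsto \tphi(g) - g$ is \emph{not} $\e$-adically contracting, since on the negative-slope summand $V_- \subset \End(V)$ the operator $\tphi$ is $\e$-adically expanding, regardless of how large $n$ is. The paper instead conjugates $\phi$ into a standard form $K_1 x\s$ with $x \in \tW{}^S$ via \cite{Vi}, restricts the conjugator $g$ to a set $\Lambda\cu_J$ with $\Lambda$ finite (Lemma~\ref{f}), and then runs a downward induction along a Moy--Prasad-type filtration $H_{\ge r}$; the key geometric input is Lemma~\ref{closed}, that $\tphi - 1$ preserves algebraically closed admissible subsets, which lets one play irreducibility of each graded piece $V\langle r^-\rangle$ against finiteness of $\Lambda$.

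For the second step, your slopewise argument is circular in exactly the way you flag at the end but do not resolve. The hypothesis gives only $Y = Y_+ + Y_0 + Y_- \in H$, not $Y_+ \in H$; worse, the components $X_+, X_0, X_-$ of $X \in \e^n H$ need not themselves lie in $H$, so neither side of your proposed equivalence ``$Y_+ \in \End(M)$ iff each $\tphi^i(X_+) \in \End(M)$'' is available, and your slope-$0$ ``obstruction'' is not well-posed. The paper proves Lemma~\ref{sec} by contradiction: assuming $h \in \e^n H$ but $h \notin O_H$, it runs the construction simultaneously over the line $\{rh : r \in \kk\}$ and extracts from the integrality constraint $y_{rh,\th_r} - g_r \in A_0 \cap \e^{n_0}H$ a system of additive polynomial equations in $r$ over $\kk$ with only finitely many solutions (Lemma~\ref{teq}), contradicting $|\kk| = \infty$. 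The slope-$0$ piece is indeed the subtle one, handled by choosing an $\FF_q[[\e]]$-basis of $A_0 \cap \e^{n_0}H$ that minimizes the number of unit leading coefficients, forcing those coefficients to be $\FF_q$-linearly independent.
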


\begin{rem}
Vasiu only proved his inequality for $F$-crystals. However, his proof can be adapted for ``$F$-crystals" straightforwardly.
\end{rem}



After a preliminary version of this note was finished, we learnt that Xiao has confirmed Vasiu's conjecture for $F$-crystals in the preprint \cite{X2}, where he systematically generalizes several invariants, originally defined for isomorphism classes of $p$-divisible groups, to those of $F$-crystals. Then using a main strategy in \cite{LNV} he shows that all these invariants, including level torsion and isomorphism number, are all the same.

In this note, we approach Theorem \ref{main} in a different way. The proof consists of two steps. First we prove a ``linearization" lemma.
\begin{lemma}\label{first}
If $K_n \subset \{g\i \tphi(g); g \in K\}$ for some $n \in \ZZ_{>0}$, then $\e^n \End(M) \subset (\tphi-1) \End(M)$. Here $K_n=\{g \in GL(M); g \equiv 1 \mod \e^n\}$.
\end{lemma}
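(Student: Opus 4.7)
Plan. Given $C\in\End(M)$, the goal is to produce $X\in\End(M)$ with $(\tphi-1)(X)=\e^n C$. A single application of the hypothesis to $h:=1+\e^n C\in K_n$ yields $g\in GL(M)$ with $g\i\tphi(g)=h$, which upon rearrangement reads $(\tphi-1)(g)=\e^n gC$. Since $(\tphi-1)(1)=0$, writing $u:=g-1\in\End(M)$ turns this into
\[ \e^n C=(\tphi-1)(u)-\e^n uC, \]
so $\e^n C$ lands in $(\tphi-1)\End(M)$ up to an error term $\e^n uC\in\e^n\End(M)$.

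The plan is to feed this error back into the hypothesis and iterate. Set $C_0:=C$; having constructed $C_k\in\End(M)$, apply the hypothesis to $1+\e^n C_k\in K_n$ to obtain $g_k\in GL(M)$ and put $u_k:=g_k-1$, $C_{k+1}:=u_kC_k$, so that $\e^n C_k=(\tphi-1)(u_k)-\e^n C_{k+1}$. Alternating-sign telescoping yields
\[ \e^n C=(\tphi-1)\Bigl(\sum_{k=0}^{N}(-1)^k u_k\Bigr)+(-1)^{N+1}\e^n C_{N+1}. \]
Letting $N\to\infty$ and using continuity of $\tphi-1$, the lemma follows once both $C_{N+1}\to 0$ and $\sum_k(-1)^k u_k$ converge $\e$-adically in $\End(M)$; these both reduce to arranging $u_k\in\e^{a_k}\End(M)$ with $a_k\to\infty$, i.e.\ each $g_k\in K_{a_k}$.

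The main obstacle is that the hypothesis merely provides some $g_k\in GL(M)$, with no a priori control on its distance to $1$; in particular $u_k$ need not lie in $\e\End(M)$, so neither required convergence is automatic. To overcome this I plan to exploit the non-uniqueness of $g_k$: its $\psi$-fibre above $h=1+\e^n C_k$ is the coset $g_k\cdot S_k$, where $S_k=\{t\in GL(M):\tphi(t)=h\i t h\}$ is the twisted stabilizer, so $g_k$ may be replaced by $g_kt$ for any $t\in S_k$ without altering $\psi(g_k)$. Understanding $S_k$, and in particular its image in the successive quotients $GL(M)/K_m$, is the heart of the matter; the natural tool is the Dieudonn\'e decomposition $\End(V)=V_+\oplus V_0\oplus V_-$, where on $V_\pm$ the topological nilpotence of $\tphi^{\pm 1}$ already renders $\tphi-1$ invertible outright, and on $V_0$ a Lang--Steinberg-style surjectivity (available because $\kk$ is algebraically closed) supplies enough elements of $S_k$ to push $g_k$ into $K_{a_k}$ with $a_k\to\infty$. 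Once $u_k\to 0$ is secured, the tail $C_{k+1}=u_kC_k$ also tends to $0$, and $X:=\sum_{k\ge 0}(-1)^k u_k\in\End(M)$ realizes $(\tphi-1)(X)=\e^n C$.
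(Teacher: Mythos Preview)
Your telescoping scheme reduces the lemma to the assertion that, for $h\in K_m$ with $m$ large, one may choose $g\in K_{a}$ with $g^{-1}\tphi(g)=h$ and $a\to\infty$ as $m\to\infty$. This is the genuine gap: your proposed mechanism for securing it does not work. The coset freedom you invoke is governed by the twisted stabilizer $S_k$, which for $h_k$ close to $1$ is essentially $A_0\cap K=\{t\in K:\tphi(t)=t\}$; but $A_0$ is an $\FF_q((\e))$-form of $V_0$, so the image of $A_0\cap K$ in $K/K_1\cong GL_r(\kk)$ is a \emph{finite} group (of $\FF_q$-rational type), far too small to translate an arbitrary $g_k$ into $K_1$. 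The appeal to the slope decomposition and to invertibility of $\tphi-1$ on $V_\pm$ does not help at the integral level: $\End(M)$ is in general \emph{not} the direct sum of its intersections with $V_+,V_0,V_-$, and $\tphi$ does not preserve the $\e$-adic filtration $\{\e^m\End(M)\}$ (indeed $\tphi(\End(M))\not\subset\End(M)$ whenever there is a negative slope in $\End(V)$), so there is no well-posed ``solve modulo $\e$ and lift'' procedure to which Lang--Steinberg could be applied. Worse, the linearization of ``for $h$ near $1$ find $g$ near $1$'' is exactly $\e^m\End(M)\subset(\tphi-1)(\e^{m'}\End(M))$, which is the statement you are trying to prove; so without an independent input the iteration is circular.

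The paper's argument supplies that missing input by completely different means. After conjugating $\phi$ into a standard form $\phi\in K_1x\s$ with $x\in\tW^S$ (Viehmann), Lemma~\ref{f} sharpens the hypothesis to $K_n\subset\{g^{-1}\tphi(g):g\in\Lambda\,\cu_J\}$ with $\Lambda$ a \emph{finite} set. One then replaces the naive $\e$-adic filtration by a Moy--Prasad-type filtration $H_{\ge r}$ adapted to $x$, so that elements of $\Lambda$ preserve each graded piece $V\langle r\rangle$ while elements of $\cu_J$ shift strictly deeper. A descending induction on $r$ reduces to showing that a certain closed set $X\subset V\langle r^-\rangle$ satisfies $V\langle r^-\rangle=\bigcup_{h\in\Lambda}h^{-1}X$; closedness comes from Lemma~\ref{closed} (that $\tphi-1$ preserves algebraically closed admissible subsets), and then finiteness of $\Lambda$ together with irreducibility of $V\langle r^-\rangle$ forces $X=V\langle r^-\rangle$. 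None of these ingredients---the finite set $\Lambda$, the adapted filtration, or the geometric closedness lemma---appears in your plan, and some substitute for them is needed to close the gap.
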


Then we show
\begin{lemma}\label{sec}
If $\e^n H \subset (\tphi-1) H$ for some $n \in \ZZ_{\ge 0}$, then $\e^n H \subset O_H$. Recall that $H$ is a lattice of $\End(M)$.
\end{lemma}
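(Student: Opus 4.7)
The plan is to use the slope decomposition $\End(V) = V_+ \oplus V_0 \oplus V_-$, which is preserved by $\tphi$. Writing $x = x_+ + x_0 + x_-$ and $y = y_+ + y_0 + y_-$ with $y_\bullet$ the projection of $y$ to $V_\bullet$, the equation $x = (\tphi - 1) y$ decouples into $x_\bullet = (\tphi - 1) y_\bullet$ for each $\bullet \in \{+, 0, -\}$. Since $O_H = O_{H,+} \oplus O_{H,0} \oplus O_{H,-}$ also respects this decomposition, it suffices to establish $x_\bullet \in O_{H,\bullet}$ separately.

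A useful intrinsic reformulation of the three pieces of $O_H$ is
\[
O_{H,+} = \{v \in V_+ : \tphi^k(v) \in H \text{ for all } k \ge 0\},
\]
and the obvious analogues for $O_{H,-}$ (using $\tphi^{-1}$) and $O_{H,0}$ (both directions in $k$); this follows from the maximality in the definition and a standard Nakayama argument. The problem therefore becomes: show that the forward (resp.\ backward, resp.\ two-sided) $\tphi$-orbit of each $x_\bullet$ stays inside $H$.

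For the $+$-piece, topological nilpotency of $\tphi|_{V_+}$ makes $\tphi - 1$ invertible on $V_+$ with $\e$-adically convergent inverse $-\sum_{k \ge 0} \tphi^k$, yielding the formula $y_+ = -\sum_{k \ge 0} \tphi^k(x_+)$ together with the telescoping identity $\tphi^N(y_+) = y_+ + \sum_{k=0}^{N-1} \tphi^k(x_+)$. Combined with the integrality bound $y_+ \in \pi_+(H)$ and iterated applications of the hypothesis $\e^n H \subset (\tphi - 1) H$ to successive terms of the orbit, this should confine each $\tphi^k(x_+)$ to $H$. Dually for the $-$-piece, using $\tphi^{-1}$ in place of $\tphi$. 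For the $0$-piece, decompose $V_0$ into isoclinic summands of slope $0$ and, after extending scalars to a sufficiently large extension of $\kk((\e))$ trivialising each isoclinic summand, reduce to an additive Lang-type assertion solvable using that $\kk$ is algebraically closed.

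The main obstacle is that the projections $\pi_\bullet$ in general carry $H$ only into the possibly strictly larger lattice $\pi_\bullet(H) \supseteq H \cap V_\bullet$, so a priori neither $x_\bullet$ nor $y_\bullet$ need lie in $H$ individually. The delicate step is therefore to exploit the \emph{full} hypothesis $\e^n H \subset (\tphi - 1)H$, not merely the existence of a single preimage $y \in H$ of $x$, to force the slope projections back into $H$ along the entire $\tphi$-orbit.
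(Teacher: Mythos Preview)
Your setup is correct: the slope decomposition of $\End(V)$, the intrinsic description
\[
O_{H,+}=\{v\in V_+:\tphi^k(v)\in H\text{ for all }k\ge 0\}
\]
(and analogues), and the identification of the central obstacle that $\pi_\bullet(H)\supsetneq H\cap V_\bullet$ in general. But the proposal stops precisely where the proof begins. The phrase ``iterated applications of the hypothesis $\e^n H\subset(\tphi-1)H$ to successive terms of the orbit\ldots\ should confine each $\tphi^k(x_+)$ to $H$'' is not substantiated, and in fact naive iteration fails: from $x\in\e^nH$ and $y\in H$ with $(\tphi-1)y=x$ one gets $\tphi(y)=y+x\in H$, but $\tphi(x)\in\e^n\tphi(H)$ need not lie in $\e^nH$ (neither $\tphi$ nor $\tphi^{-1}$ preserves $H$), so the hypothesis cannot be reapplied to $\tphi(x)$. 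For the $+$-piece one can use nilpotency to get $\tphi^N(x_+)\in H$ for large $N$, but there is no mechanism in your outline to propagate this \emph{down} to small $N$. The slope-$0$ sketch (``Lang-type assertion after extending scalars'') is likewise only a restatement of the problem: the kernel of $\tphi-1$ on $V_0$ is the $\FF_q((\e))$-form $A_0$, and the whole question is whether the $A_0$-ambiguity in the preimage can be resolved inside $H$.

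The paper's proof supplies the missing idea, and it is of a different nature. It argues by contradiction: assume some $h\in\e^nH$ has $h\notin O_H$, quantified by an integer $n_0\le -1$ measuring how far the orbit of the components $x_\pm,x_0$ protrudes from $H$. The key move is to use the hypothesis not once but for \emph{every} scalar multiple $rh$, $r\in\kk$: each $rh$ has a preimage $g_r\in H$ under $\tphi-1$, and also an explicit preimage $y_{rh,\th_r}\in\e^{n_0}O_H$ built from the geometric series and an Artin--Schreier choice $\th_r$. Their difference lies in $A_0\cap\e^{n_0}H$, and reducing modulo $\e^{n_0+1}H$ produces, for every $r\in\kk$, a system of equations in $\kk$ of the shape
\[
\sum_i\b_{i,l}(z_{i,r}+\bar a_{i,r})+\sum_{k\ge 0}\bigl(\a_{k,l}\,r^{q^k}+\g_{k,l}\,r^{q^{-k-1}}\bigr)=0,\qquad z_{i,r}^q-z_{i,r}=r\bar c_i.
\]
A careful preliminary choice of $\FF_q[[\e]]$-basis of $A_0\cap\e^{n_0}H$ (minimising the number of leading coefficients $\bar c_i\ne 0$) forces these $\bar c_i$ to be $\FF_q$-linearly independent, and a separate finiteness lemma (Jacobian computation plus induction on $|J|$) then shows the system admits only finitely many $r$, contradicting $|\kk|=\infty$. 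This ``vary $r$ over all of $\kk$'' device is what your outline is missing.
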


Assuming these two lemmas, one can finish the proof immediately.
\begin{proof}[Proof of Theorem \ref{main}]
If $n_M=0$, it follows form \cite[Lemma 2.3]{Va}. Now we assume $n_M \ge 1$ and it follows directly from Lemma \ref{first} and Lemma \ref{sec}.
\end{proof}

\

{\it Acknowledgement.} I would like to thank X. Xiao for sending me his preprint \cite{X2} and his helpful correspondence.

\section{Algebraically closed admissible subsets}
Let $U$ be a finite dimensional $\kk((\e))$-vector space and $N \subset U$ a lattice. For an integer $i \in \ZZ$ we set $N_i=\e^i N$. Let $\psi$ be a $\s$-linear automorphism of $U$. We say $Z \subset U$ is an algebraically {\it closed admissible subset} if there exist $n < m \in \ZZ$ and a closed subvariety $Z_{n,m} \subset N_n / N_m$ such that $Z$ is the preimage of $Z_{n,m}$ under the natural projection $\pi_{n,m}: N_n \to N_n / N_m$. Clearly our definition does not depend on the choices of $n$, $m$ and $Z_{n,m}$. The following lemma implies that it doesn't depend on the choice $N$ either.
\begin{lemma}\label{linear}
$GL(U)$ preserves algebraically closed admissible subsets of $U$.
\end{lemma}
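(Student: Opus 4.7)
The plan is to reduce the statement to the assertion that algebraic admissibility is independent of the lattice. Fix $g\in GL(U)$ and an admissible $Z\subset U$ coming from some pair $n<m$ and closed subvariety $Z_{n,m}\subset N_n/N_m$. Set $N'=g\cdot N$, which is again a $\kk[[\e]]$-lattice in $U$, and note that $g$ restricts to $\kk[[\e]]$-module isomorphisms $N_i\isomarrow N'_i$ for every $i$, hence induces an isomorphism of affine varieties $N_n/N_m\isomarrow N'_n/N'_m$ sending $Z_{n,m}$ to a closed subvariety $Z'_{n,m}$. Consequently $gZ$ is the preimage of $Z'_{n,m}$ under the projection $N'_n\to N'_n/N'_m$, so the lemma is reduced to: a set admissible with respect to $N'$ is also admissible with respect to $N$.

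To prove this, I would use that $N$ and $N'$ are commensurable, fixing integers $c\ge d$ with $N_c\subset N'\subset N_d$; multiplying by $\e^i$ gives $N_{c+i}\subset N'_i\subset N_{d+i}$ for every $i\in\ZZ$. Let $W\subset U$ be the preimage of a closed $W'_{n,m}\subset N'_n/N'_m$ under $N'_n\to N'_n/N'_m$. I claim $W$ is admissible with respect to $N$ with the bounds $n+d<m+c$ (the strict inequality holds because $n<m$ and $c\ge d$). Indeed, $W\subset N'_n\subset N_{n+d}$ gives the outer containment, while $N_{m+c}\subset N'_m$ together with the fact that $W$ is a union of cosets of $N'_m$ forces $W$ also to be a union of cosets of $N_{m+c}$.

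It remains to show that the image of $W$ in $N_{n+d}/N_{m+c}$ is Zariski-closed. The inclusions $N_{m+c}\subset N'_n\subset N_{n+d}$ induce a closed linear embedding
\[
N'_n/N_{m+c}\;\hookrightarrow\;N_{n+d}/N_{m+c}
\]
of finite-dimensional $\kk$-vector spaces, and the image of $W$ in $N_{n+d}/N_{m+c}$ is contained in $N'_n/N_{m+c}$. Within the latter, it coincides with the preimage of $W'_{n,m}$ under the linear quotient $N'_n/N_{m+c}\twoheadrightarrow N'_n/N'_m$, and is therefore closed. Viewing it back inside $N_{n+d}/N_{m+c}$ through the closed embedding above yields closedness there.

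I expect the main obstacle to be merely the careful bookkeeping of bounds, namely identifying that $n+d<m+c$ is the correct pair so that all three required inclusions $W\subset N_{n+d}$, $N_{m+c}\subset N'_m$ and $N_{m+c}\subset N'_n\subset N_{n+d}$ hold simultaneously. Once this is in place, the closedness reduces to the routine observation that $\kk$-linear subspaces of finite-dimensional $\kk$-vector spaces are closed and that preimages of closed sets under linear maps are closed.
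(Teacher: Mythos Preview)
Your argument is correct. The reduction to lattice-independence is valid because $g$ is $\kk((\e))$-linear, hence $\kk$-linear, so the induced map $N_n/N_m\to N'_n/N'_m$ is an isomorphism of affine spaces; and your commensurability bookkeeping ($N_{m+c}\subset N'_m\subset N'_n\subset N_{n+d}$, with $n+d<m+c$ forced by $n<m$ and $c\ge d$) is exactly what is needed to exhibit the image of $W$ in $N_{n+d}/N_{m+c}$ as a closed subset of a linear subspace.

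This is a genuinely different route from the paper. The paper uses the Cartan decomposition $GL(U)=KYK$ with $K=GL(N)$ and $Y$ the diagonal torus, so it only has to check the generators $K\cup Y$; for such $g$ the lattice $g(N_n)$ is explicit, and closedness comes from observing that $g(N_n)/N_{m'}\to g(N_n)/g(N_m)$ is a trivial vector bundle for $m'\gg 0$. Your approach bypasses the Cartan decomposition entirely by proving once and for all that admissibility is independent of the lattice, handling every $g\in GL(U)$ uniformly. The trade-off is that the paper's method stays closer to the fixed coordinate system (useful later when explicit bases are chosen for the slope pieces), while yours is more conceptual and would transplant unchanged to any setting where two lattices in a finite-rank module over a DVR are commensurable.
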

\begin{proof}
We fix a $\kk[[\e]]$-basis of $E=\{e_i; 1 \le i \le n\}$ $N$. By Cartan decomposition we have $GL(U)=K Y K$, where $K=GL(N)$ and $$Y=\{g \in GL(U); g(e_i)=\e^{a_i}e_i \text{ for some } a_i \in \ZZ, 1 \le i \le n\}.$$ Hence it suffices to show that $K \cup Y$ preserves algebraically closed admissible subsets of $U$. Let $n < m \in \ZZ$ and $g \in K \cup Y$. Then $g(N_n)$ is an algebraically closed admissible subset. Note that the natural morphism $g: N_n / N_m \to g(N_n) / g(N_m)$ induced by $v \mapsto g(v)$ for $v \in N_n$ is an isomorphism. Then the lemma follows from the observation that the natural projection $g(N_n) / N_{m'} \to g(N_n) / g(N_m)$ is a trivial vector bundle for any sufficiently large $m' \in \ZZ$.
\end{proof}

Now we come to the main result of this section, which plays a key role in the proof of Lemma \ref{first}.
\begin{lemma}\label{closed}
Let $\psi$ be a $\s$-linear automorphism of $U$. Then $\psi-1$ preserves algebraically closed admissible subsets of $U$.
\end{lemma}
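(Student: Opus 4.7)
The plan is to produce integers $c<d$ and a closed subvariety $Y\subset N_c/N_d$ such that $(\psi-1)(Z)=\pi_{c,d}^{-1}(Y)$, where $Z=\pi_{n,m}^{-1}(Z_{n,m})$. The outer bound $(\psi-1)(Z)\subset N_c$ for some $c\le n$ follows immediately from the boundedness of $\psi$ on lattices. Using the additivity $(\psi-1)(x+y)=(\psi-1)(x)+(\psi-1)(y)$ of $\psi-1$, combined with $Z+N_m=Z$, one obtains
\[(\psi-1)(Z)=(\psi-1)(Z)+(\psi-1)(N_m).\]
Thus, once we know that $(\psi-1)(N_m)\supset N_d$ for some $d\ge c$, the set $(\psi-1)(Z)$ will be invariant under translation by $N_d$ and so will equal $\pi_{c,d}^{-1}(Y)$ with $Y:=\pi_{c,d}((\psi-1)(Z))$.

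The heart of the argument is the Lang--Steinberg-type surjectivity $N_d\subset(\psi-1)(N_m)$ for sufficiently large $d$, and I would prove it via the Dieudonn\'{e} slope decomposition $U=\bigoplus_\lambda U^\lambda$. Since admissibility is independent of the choice of $N$ (Lemma \ref{linear}), replace $N$ by a lattice compatible with this decomposition and handle each slope separately. For $\lambda>0$, a suitable power of $\psi$ is topologically contracting on an adapted lattice in $U^\lambda$, so a geometric series converges and provides an inverse to $1-\psi$, showing that $\psi-1$ is bijective on each lattice of $U^\lambda$. For $\lambda<0$, the factorization $\psi-1=\psi\circ(1-\psi^{-1})$ reduces to the previous case applied to $\psi^{-1}$. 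For $\lambda=0$, choose a $\psi$-stable lattice $N^0\subset U^0$; on each finite-dimensional quotient $N^0/\e^k N^0$, $\psi-1$ becomes the classical Lang--Steinberg morphism $v\mapsto F(v)-v$ with $F$ purely inseparable, hence surjective since $\kk$ is algebraically closed, and these surjections compile through the inverse limit in $k$ to yield $(\psi-1)(N^0)=N^0$. All slope components together give $N_d\subset(\psi-1)(N_m)$ for $d$ large.

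For closedness of $Y$, I would enlarge $m$ to some $b'\ge m$ satisfying $(\psi-1)(N_{b'})\subset N_d$, arranged further so that $b'-n=d-c$ (possible since both differences equal the ``defect'' $\max(0,-r)$, where $\psi(N)\subset N_r$). Then $\psi-1$ descends to a morphism $\bar f\colon N_n/N_{b'}\to N_c/N_d$ between affine $\kk$-schemes of equal dimension, of the coordinate form $v\mapsto A\sigma(v)-v$. Because $\sigma$ is a purely inseparable Frobenius, $d(A\sigma)=0$, so $d\bar f=-\id$ is invertible everywhere, making $\bar f$ \'{e}tale; Lang--Steinberg applied to the additive group $N_c/N_d$ shows $\bar f$ is surjective with finite fibers, hence finite and in particular proper. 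Therefore $Y=\bar f(Z_{n,b'})$, where $Z_{n,b'}:=\pi_{m,b'}^{-1}(Z_{n,m})$ is closed, is itself closed in $N_c/N_d$, completing the argument.

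The principal obstacle is the Lang--Steinberg surjectivity of the second paragraph: coordinating the slope-decomposed lattices requires invoking the independence of $N$ in admissibility, and the slope-zero case must compile classical Lang--Steinberg surjections across all truncations $N^0/\e^k N^0$ through an inverse-limit argument. The bookkeeping in the final step, to ensure that $\bar f$ is an endomorphism of $\kk$-schemes of equal dimension so that Lang--Steinberg applies in its square form, is the other main technical subtlety.
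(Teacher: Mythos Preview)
Your overall architecture matches the paper's: use Lemma~\ref{linear} to pass to a lattice adapted to the slope decomposition, then show that $\psi-1$ induces a \emph{finite} morphism on suitable finite quotients, so that closed sets push forward to closed sets. Your second paragraph (the Lang--Steinberg surjectivity) is correct and is essentially the paper's statement $(\psi-1)N^\lambda=N^\lambda+\psi(N^\lambda)$.

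The gap is in your finiteness argument for $\bar f$. You assert that $\bar f$ has coordinate form $v\mapsto A\sigma(v)-v$, hence $d\bar f=-\id$ and $\bar f$ is \'{e}tale. But when a negative slope is present, the target $N_c/N_d$ is genuinely larger than the source $N_n/N_{b'}$ (one has $c<n$), and in coordinates $\bar f$ is $v\mapsto A\sigma(v)-Bv$ where $B$ records the inclusion $N_n\hookrightarrow N_c$ reduced mod $N_d$; this $B$ is \emph{not} invertible. Concretely, for $U=\kk((\e))$ and $\psi(v)=\e^{-1}\sigma(v)$ one finds $\bar f:(u_0,\dots,u_{m-1})\mapsto(u_0^q,\,u_1^q-u_0,\,\dots,\,u_{m-1}^q-u_{m-2})$, whose differential has a zero first row, so $\bar f$ is not \'{e}tale. (For the same reason, your appeal to ``Lang--Steinberg on $N_c/N_d$'' is not literally available: $\bar f$ is not an endomorphism of that group.) The paper handles precisely this case by an explicit integrality computation in the coordinate ring, bootstrapping from $x_{r_\lambda,l}^q=(\psi^*-1)(y_{r_\lambda,l})$. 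Your \'{e}tale argument is valid for slopes $\lambda\ge 0$ (there $c=n$, $b'=d$, and $B=\id$); for $\lambda<0$ you can instead reuse your own observation that $\psi-1=\psi\circ(1-\psi^{-1})$ is a \emph{bijection} between adapted lattices, hence bijective on quotients, and a bijective homomorphism of connected algebraic groups is finite. The dimension bookkeeping you flag at the end must likewise be carried out slope by slope rather than for a single global pair $(c,d)$.
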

\begin{proof}[Proof of Proposition \ref{closed}]
Let $U=\bigoplus_{\l} U^{\l}$ be the slope decomposition with respect to $\psi$. We fix a basis $E^\l=\{e_{i, \l}; 1 \le i \le r_\l\}$ for each $U^{\l}$. Thanks to Lemma \ref{linear}, we may conjugate $\psi$ by a proper element of $GL(U)$ so that $\psi(e_{i,\l})=e_{i+1,\l}$ for $i=1, \cdots, r_\l-1$ and $\psi(e_{r_\l,\l})=\e^{s_\l} e_{1,\l}$, where $s_\l=\l r_\l \in \ZZ$. Moreover, if $\l=0$, we can assume $\psi(e_i)=e_i$ for each $e_i \in E^\l$.

Let $N^\l$ be the lattice spanned by $E^\l$ and $N^\l_n=\e^n N^\l$ for any $n \in \ZZ$. We see that $(\psi-1) N^\l =\max \{N^\l, \psi(N^\l)\}=N^\l+\psi(N^\l)$.

Let $n < m \in \ZZ$. To prove the lemma, it suffices to show the natural surjective morphism
\begin{align}\label{fin} \psi-1: \frac{N^\l_n}{N^\l_m} \longrightarrow \frac{(\psi-1)N^\l_n}{ (\psi-1)N^\l_m}
\end{align} is a finite morphism.

Without loss of generality, we assume $\l < 0$. Then (\ref{fin}) becomes $$\psi-1: \frac{N^\l_n}{N^\l_m} \longrightarrow \frac{\psi(N^\l_n)}{\psi(N^\l_m)}.$$ Since $N^\l_n/N^\l_m$ is isomorphic to the $\kk$-vector space spanned by $\e^j e_{i,\l}$ for $1 \le i \le r_\l$ and $n \le j \le m-1$, its coordinate ring is $\kk[N^\l_n/N^\l_m]=\kk[x_{i,j}; 1 \le i \le r_\l, n \le j \le m-1]$, where $x_{i,j}$ is the dual linear function such that $x_{i,j}(\e^{j'}e_{i',\l})=\d_{i,i'}\d_{j,j'}$. Similarly $\kk[\psi(N^\l_n)/\psi(N^\l_m)]=\kk[y_{i,j}; 1 \le i \le r_\l, n \le j \le m-1]$, where $y_{i,j}(\psi(\e^{j'} e_{i',\l}))=\d_{i,i'}\d_{j,j'}$. Let $\psi^*: \kk[\psi(N^\l_n)/\psi(N^\l_m)] \to \kk[N^\l_n/N^\l_m]$ be the induced homomorphism between coordinate rings. One computes directly that \begin{align*}(\psi^*-1)(y_{i,j})=\begin{cases}& x_{i,j}^q-x_{i+1,j}, \text{ if } 1 \le i \le r_\l-1;\\ & x_{r_\l,j}^q-x_{1,j-s_\l}, \text{ if } i=s_\l, j-s_\l \ge n; \\ & x_{r_\l,j}^q, \text{ otherwise. }\end{cases}\end{align*} Let $l \in \ZZ$ such that $n \le l \le n+s_\l-1$. By the above rules, $x_{r_\l,l}^q=(\psi^* -1)(y_{r_\l,l})$. Hence $x_{r_\l,l}$ is integral over $\kk[\psi(N^\l_n)/\psi(N^\l_m)]$. Note that $(\psi^*-1)(y_{r_\l-1,l})=x_{r_\l-1,l}^q-x_{r_\l,l}$. So $x_{r_\l-1,l}$ is integral over $\kk[\psi(N^\l_n)/\psi(N^\l_m)]$. Repeating this argument, we see that $x_{i,j}$ is integral over $\kk[\psi(N^\l_n)/\psi(N^\l_m)]$ for $1 \le i \le r_\l$ and  $n \le j \le m-1$. Therefore $\psi-1$ is a finite morphism as desired.
\end{proof}

\section{Proof of Lemma \ref{first}} We keep $M$, $\phi$ and $V$ as in the section of introduction. Let $H=\End(M)$ and $K=GL(M)$. For each $n \in \ZZ_{\ge 0}$, we define $K_n=\{g \in GL(M); g \equiv 1 \mod \e^n\}$. The main purpose of this section is to prove Lemma \ref{first}. The proof is based on {\it Bruhat decomposition} of $GL(V)$ and certain {\it Moy-Prasad filtration} of $\End(M)$, which we now introduce.

Fix a basis $E=\{e_i; 1 \le i \le r\}$ of $M$ and identify $K$ with $GL_r(\kk[[\e]])$ so that for each $g=(g_{i,j}) \in GL_r(\kk[[\e]])=K$, we have $g(e_j)=\sum_{i=1}^r g_{i,j}e_i$. Let $M^\vee$ be the dual lattice of $M$ and let $E^\vee=\{e_i^\vee; 1 \le i \le r\} \subset M$ be the basis dual to $E$. Then $H=M \otimes_{\kk[[\e]]} M^\vee$. Let $\bar M \subset M$ ($\bar M^\vee \subset M^\vee$) be the $\kk$-vector space spanned by $E$ ($E^\vee$). Let $\bar H=\bar M \otimes \bar M^\vee$ and $\bar K=GL(\bar M) \cong GL_r(\kk) \subset K$.

Let $B \subset \bar K$ be the subgroup of upper-triangulated matrices. Let $U \subset B$ be its unipotent radical and $T \subset B$ the subgroup of invertible diagonal matrices. Then $B=TU$. Let $\pi: K \to \bar K$ be the reduction modulo $\e$ map. Let $\ci=\pi\i(B)$, which is called a {\it standard Iwahori subgroup} with respect to $E$. Let $\cu=\pi\i(U)$ be the {\it prounipotent subgroup} of $\ci$. Let $W \subset \bar K$ be the subgroup of permutation matrices. Let $Y \cong \ZZ^r$ be the cocharacter group of $T$. Since $W$ normalizes $T$, we define $\tW=Y \rtimes W$. Then we have the Bruhat decomposition $$GL(V)=\bigsqcup_{\tw \in \tW} \ci \tw \ci.$$

Let $\Phi=\{\a_{i,j}=e_i-e_j; 1 \le i, j \le r\} \setminus \{0\}$ be the set of roots and $\tPhi=\{\a+n \in \ZZ \Phi \oplus \ZZ; \a \in \Phi \cup \{0\}, n \in \ZZ\} \setminus \{0\}$ the set of affine roots. Set $ \tPhi^+=\{\a_{i,j}+n \in \tPhi; \text{ either $n>0$ or $n=0$ and $i < j$}\}$. Then $\tPhi=\tPhi^+ \cup -\tPhi^+$. Note that $\tW$ acts on $\tPhi$. We define the {\it length} of $\tw \in \tW$ by $\ell(w)=\sharp \{a \in \tPhi^+; \tw(a)<0\}$. Let $S=\{s \in W; \ell(s)=1\}$ be the set of simple reflections of $W$. Let $s \in S$ and $\tw \in \tW$. We have \begin{align*}\ci \tw \ci s \ci=\begin{cases}& \ci \tw s \ci, \text{ if } \ell(\tw s)=\ell(\tw)+1; \\ & \ci \tw s \ci \bigsqcup \ci \tw \ci, \text{ otherwise. } \end{cases} \end{align*} Let $\tW {}^S=\{\tw \in \tW; \ell(ws)=\ell(w)+1, s \in S\}$. For any $x \in \tW {}^S$ and $w \in W$ we have $\ell(xw)=\ell(x)+\ell(w)$ and $\ci x \ci w \ci= \ci x w \ci$.

Let $a=\a_{i,j}+n \in \tPhi$. We define $\te_a=\e^n e_i \otimes e_j^\vee \in H$ and $U_a=\{1+ c \te_a \in GL(V); c \in \kk\}$. Then $\cu=\prod_{a \in \tPhi^+} U_a$ and each $g \in G(L)$ has a unique expression $g=h w i$ with $\tw \in \tW$, $i \in \ci$ and $h \in U_w \doteq \prod_{\a \in \tPhi, \tw\i(\a)<0} U_{\a}$.

Let $J \subset S$. Let $P_J=L_J U_J \supset T$ be the standard parabolic subgroup of type $J$ with Levi subgroup $L_J \supset T$ and unipotent radical $U_J$. Let $\Phi_J \subset \Phi$ be the set of root of $L_J$. Then $L_J$ is generated by $T$ and the root subgroup $U_\a$ with $\a \in \Phi_J$. Let $W_J \subset W$ be the Weyl group of $L_J$. We have $L_J=\bigsqcup_{w \in W_J} B_J w B_J$, where $B_J=B \cap L_J$ is a Borel subgroup of $L_J$. Let $\cu_J=\pi\i(U_J)=U_J K_1 \subset \cu$. Then $\cu_J$ is normalized by $L_J$.

For $g, f \in GL(V)$ and $D \subset GL(V)$ we write ${}^g f=g f g\i$ and ${}^g D=\{{}^g h; h \in D\}$. By abuse of notations, we denote by $\s: V \to V$ the $\s$-linear endomorphism of $V$ which fixes each element in the basis $E$ of $M$.

\begin{lemma}\label{f}
Let $\phi \in K_1 x \s$ be a $\s$-linear automorphism of $V$ with $x \in \tW {}^S$. Let $J=\max\{J' \subset S; x J' x\i=J'\}$. If $K_n \subset \{g\i \tphi(g); g \in K\}$ for some $n \in \ZZ_{>0}$, then $K_n \subset \{g\i \tphi(g); g \in \L \cu_J\}$, where $\L=\{g \in L_J; {}^{x\s} g=g\}$ is a finite set.
\end{lemma}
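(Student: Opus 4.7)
The strategy exploits the left-translation symmetry $\a(fg) = \a(g)$ for $f \in K^{\tphi} := \{f \in K;\ \tphi(f) = f\}$, where I write $\a(g) := g\i \tphi(g)$: indeed, $\a(fg) = g\i f\i \tphi(f)\tphi(g) = g\i \tphi(g) = \a(g)$. Hence it suffices to show that for every $g$ with $\a(g) \in K_n$ the coset $K^{\tphi} g$ meets $\L \cu_J$ --- that is, one can left-multiply $g$ by a $\tphi$-fixed element, without altering $\a(g)$, so as to land in $\L \cu_J$.

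I would prove this in two stages. Stage one, the reductive-quotient picture: the condition $\a(g) \in K_n \subset K_1$ forces $\bar g := g \bmod \e$ to lie in the fixed subgroup $\bar K^{\bar\phi}$ of $\bar K = GL_r(\kk)$ under the Frobenius-type automorphism $\bar\phi := x\s$-conjugation (note that the $k_0 \in K_1$ factor reduces to the identity). The maximality of $J$ among subsets of $S$ with $x J x\i = J$ makes $P_J$, $L_J$, $U_J$ all $\bar\phi$-stable, and $L_J^{\bar\phi} = \L$. Applying Lang--Steinberg to the connected reductive group $\bar K$ together with an Iwasawa-type decomposition for the $\bar\phi$-stable homogeneous space $\bar K / P_J$, and then a second application inside $L_J$, yields a decomposition $\bar g = \bar f \cdot \bar\ell \cdot \bar u$ with $\bar f \in \bar K^{\bar\phi}$, $\bar\ell \in \L$, $\bar u \in U_J$.

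Stage two, the $\e$-adic lift: using successive approximation modulo $\e^m$ for $m \ge 1$, I would lift the mod-$\e$ decomposition to $K$. On each graded piece the action of $\tphi$ is essentially a Frobenius-type $\kk$-linear endomorphism (via $\s$ and twisted by $x$), and because $\kk$ is algebraically closed the image of the linearized Lang map $k \mapsto k\i \tphi(k)$ is large enough at each level to solve the inductive congruences; the hypothesis $K_n \subset \a(K)$ is what guarantees that solutions actually exist at every level. Passing to the $\e$-adic limit produces a $\tphi$-fixed $f \in K$ with $fg \in \L \cu_J$, as required. The main obstacle, I anticipate, will be the careful bookkeeping of these iterated lifts --- in particular controlling how the twist by $k_0 \in K_1$ in $\phi = k_0 x \s$ interacts with the decomposition at each stage --- and exploiting the condition $x \in \tW {}^S$ (length additivity $\ell(xw) = \ell(x) + \ell(w)$ for $w \in W$) to ensure that the Bruhat cells of $K$ interact with $\tphi$-conjugation in a way compatible with the iterative procedure.
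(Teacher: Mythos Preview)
Your proposal has a genuine gap at the outset of Stage one: the automorphism $\bar\phi$ you posit on $\bar K = GL_r(\kk)$ does not exist. Writing $x = \e^\lambda w \in \tW^S$ with $\lambda \in Y$ dominant and $w \in W$, conjugation by $x\s$ sends $g \in K$ to $\e^\lambda (w\s(g)w^{-1})\e^{-\lambda}$, which lies in $K$ only when $\e^\lambda$ is central. But for $x \in \tW^S$ with $\lambda$ central one is forced to $w = 1$, i.e.\ $x$ is a scalar; in every nondegenerate case $x$ fails to normalize $K$, so $\tphi$ does not descend to $\bar K$ and the assertion ``$\bar g \in \bar K^{\bar\phi}$'' has no content. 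The same obstruction undermines Stage two: since $\tphi$ does not respect the filtration $\{K_m\}$, there is no induced Frobenius-type map on the graded pieces $K_m/K_{m+1}$, and hence no framework for the successive-approximation argument you sketch. The hypothesis $K_n \subset \a(K)$ guarantees global solutions in $K$, not that they can be built level by level.

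The paper circumvents this by working not with the $\e$-adic filtration but with Iwahori double cosets in $GL(V)$, which \emph{are} stable under $\tphi$. Writing $g = hwi$ via Bruhat decomposition (with $w \in W$, $h \in U_w$, $i \in \ci$), the condition $\a(g) \in K_1$ becomes the double-coset identity $\ci x\s\ci = \ci w^{-1}h^{-1}x\s\, hw\ci$; length additivity from $x \in \tW^S$ then forces $w^{-1}xw = x$, and \cite[Lemma~4.1]{N} yields $w \in W_J$ and $g \in L_J\cu_J$. A further Bruhat argument inside $L_J$ (where, crucially, ${}^{x\s}$ \emph{does} restrict to a genuine Frobenius because $xJx^{-1} = J$ permutes the simple roots of $\Phi_J$ without affine shift) shows the $L_J$-factor of $g$ already lies in $\L$. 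This is in fact stronger than your target: no left-translation by $K^{\tphi}$ is needed, since every $g$ with $\a(g) \in K_n$ already sits in $\L\cu_J$.
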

\begin{proof}
Let $g \in K$ such that $f=g\i \tphi(g) \in K_n \subset K_1$. Assume $g=h w i$ with $w \in W$, $h \in U_w \subset \bar K$ and $i \in \ci$. By $f \phi=g\i\phi g$ we have $$\ci x\s \ci = \ci w\i h\i x\s\ h w \ci.$$ Note that ${}^\s w=w$ and $\ell(xw)=\ell(x)+\ell(w)$. We have $w\i x w=x$. By \cite[Lemma 4.1]{N}, $w \in W_J$ and $g \in L_J \cu_J$. Write $g=m u$ with $m \in L_J$ and $u \in \cu_J$. Then ${}^{x\s}u \in \cu$ and $$K_1 x \s=K_1 u\i m\i {}^{x\s}m\ {}^{x\s}u\ x\s \subset \cu_J L_J x\s $$ Since $x J x\i=J$, $m'=m\i {}^{x\s}m \in L_J$. Assume $m' \in B_J w' B_J$ for some $w' \in W_J$. Then $x\s \in \ci w' \ci x\s \ci=\ci w'x\s \ci$. So $w'=1$ and $m' \in B_J$. Write $u=\prod_{a \in \tPhi^+ \setminus \Phi_J} x_a$ with $x_a \in U_a$. Since ${}^{x\s}u \in \cu$, $x\s(a)>0$ if $x_a \neq 1$. Noticing that ${}^{x\s} B_J=B_J$, we have furthermore $x\s(a) \in \tPhi^+ \setminus \Phi_J$ if $x_a \neq 1$, which means ${}^{x\s}u \subset \cu_J$ and $m'=1$. Therefore $g \in \L \cu_J=\cu_J \L$ as desired.

It remains to show $\L$ is finite. Indeed, we have $(x\s)^s=\mu(\e) \s^s$ for some positive integer $s$ and some cocharacter $\mu \in Y$, which has only finitely many fixed points on $L_J$. Therefore $\L$ is finite.
\end{proof}

\begin{proof}[Proof of Lemma \ref{first}]
By \cite{Vi}, conjugating $\phi$ by a proper element of $K$ we may assume $\phi \in K_1 x\s$ with $x \in \tW {}^S$. Thanks to Lemma \ref{f}, there exists $J \subset S$ with $x J x\i=J$ such that $K_n \subset \{g\i \tphi(g); g \in \L \cu_J\}$, where $\L=\{g \in L_J; {}^{x\s}g=g\}$.

Fix $v \in Y \otimes_{\ZZ} \RR$ such that $\<\a, v\>=0$ if $\a \in \Phi_J$ and $0<\<\a, v\> < \frac{1}{2}$ if $\a \in \Phi^+ - \Phi_J$. Let $r \in \RR$. Define $\tPhi_r=\{a \in \tPhi; a(v)= r\}$ and $\tPhi_{\ge r}=\{a \in \tPhi; a(v)\ge r\}$. Set $V\<r\>=\oplus_{a \in \tPhi_r} \kk \te_a$ and $H_{\ge r}=\sum_{a \in \tPhi_{\ge r}} \kk[[\e]] \te_a$. Note that $H_{\ge r} H_{\ge r'} \subset H_{\ge r+r'}$ for $r, r' \in \RR$.

We prove the lemma by induction on $0 \le r \in \RR$ such that $H_{\ge r} \subset (\tphi-1)H$.  By Lemma \ref{closed}, there exists a sufficiently large integer $m$ such that $H_m \subset (\tphi-1) H$. We can assume $H_{\ge r} \subset H_n$ and $H_{\ge r} \subset (\tphi-1) H$ for some $r \ge 0$. If $H_n=H_{\ge r}$, there is nothing to prove. Now we assume that $H_{\ge r} \subsetneq H_n$. Let $r^-=\max \{t \in \RR; t < r, \tPhi_{\ge r} \subsetneq \tPhi_{\ge t}\}$. We show that $H_{\ge r^-} \subset H_n$ and $H_{\ge r^-} \subset (\tphi-1)H$, which will finish our induction argument.

Assume $H_{r^-} \nsubseteq H_n$. Since $H_{\ge r^-}=V\<r^-\> \oplus H_{\ge r}$, there exists $a=\a+j \in \tPhi_{r^-}$ with $\a \in \Phi \cup \{0\}$ and $j \in \ZZ$ such that $\te_a \in V\<r^-\> \setminus H_n$, that is, $a(v)=\<\a, v\>+j=r^-$ and $j \le n-1$. Let $b=\b+i \in \tPhi$ with $\b \in \Phi \cup \{0\}$ and $i \in \ZZ$ such that $\te_b \in H_n$. Then $i \ge n$ and \begin{align*}b(v) &=a(v)+(i-j)+\<\b-\a, v\> \\ &\ge a(v)+(i-j)-|\<\b-\a,v\>| \\ &> a(v)+1-2 \times \frac{1}{2}=r^-.\end{align*} By the choice of $r^-$, we have $b(v) \ge r$, hence $b \in \tPhi_{\ge r}$ which means $H_n \subset H_{\ge r}$ contradicting our assumption that $H_n \nsubseteq H_{\ge r}$. Therefore $H_{\ge r^-} \subset H_n$.

It remains to show $H_{\ge r^-} \subset (\tphi-1)H$. Let $u \in V\<r^-\> \subset H_n$. Then $1+u \in K_n$. By assumption, there exists $g \in \L \cu_J$ such that $$g u=(\tphi-1)(g).$$ Write $g=(1+f)h$ with $h \in \L$ and $1+f \in \cu_J$. We have $$h u +f h u=(\tphi-1)(g).$$
Since $h \in L_J$ and $1+f \in \cu_J$, we have $h V\<r^-\>=V\<r^-\>$ and $f V\<r^-\> \subset H_{\ge r} \subset (\tphi-1) H$ by the choice of $r^-$. Hence
\begin{align*}\label{*}
\tag{$\ast$} h u \in V\<r^-\> \cap (\tphi-1)H.
\end{align*}
Applying Lemma \ref{closed} with $U$ and $\psi$ replaced by $\End(V)$ and $\tphi$ respectively, we see that $H_{\ge r^-} \cap (\tphi-1) H \supset H_{\ge r}$ is an algebraically closed admissible set. Hence the image $X$ of the natural projection $$H_{\ge r^-} \cap (\tphi-1) H \to H_{\ge r^-} / H_{\ge r}=V\<r^-\>$$ is a closed subvariety. By (\ref{*}) we see that $$V\<r^-\> \subset \bigcup_{h \in \L} h\i X.$$ Since $\L$ is finite and that $V\<r^-\>$ is an irreducible variety, we see that $X=V\<r^-\>$ and $H_{\ge r^-} \subset (\tphi-1)H$ as desired.
\end{proof}

\section{Proof of Lemma \ref{sec}}
In this section, we prove Lemma \ref{sec} by reduction to absurdity.

Let $V$, $\phi$ and $H \subset \End(V)$ be the same as in the introduction section. Let $A_0=\{f \in \End(V); \tphi(f)=f\}$. Then $A_0$ is a $\FF_q((\e))$-vector space and $O_{H,0}=(A_0 \cap H) \otimes_{\FF_q[[\e]]} \kk[[\e]]$. We denote by $v_H: \End(V) \to \ZZ$ the associated valuation such that $v_H(f)=-\inf\{k \in \ZZ; \e^k f \in H\}$. Let $v: \kk((\e)) \to \ZZ$ be the ordinary valuation such that $v(\kk)=0$ and $v(\e)=1$.

\begin{proof}[Proof of Lemma \ref{sec}]
Let $h \in \e^n H$. We have to show that $h \in O_H$. Assume $h \in \e^m O_H$ for some sufficiently small integer $m$. Let $E'=\{e_i'; i \in I\}$ be a $\FF_q[[\e]]$-basis of $A_0 \cap \e^m H$.

Write $h=x_+ + x_0 + x_-$ with $x_- \in V_-$, $x_0 \in V_0$ and $x_+ \in V_+$. Recall that $V_0$, $V_\pm$ are defined in \S\ref{def}. Write $x_0= \sum_{i \in I}c_i^o e_i^o$ with each $c_i^o \in \kk[[\e]]$. Define $$n_0=\min\{v_H(c_i^o e_i^o), v_H(\tphi^k(x_+), v_H(\tphi^{-k-1}(x_-)); i \in I, k \in \ZZ_{\ge 0}\}.$$ since $\tphi(\e)=\e$, we have $v_H(e_i^o)=m$ and $v_H(c_i^o e_i^o)-n_0=v(c_i^o)+m- n_0 \ge 0$. Hence $c_i^o e_i^o=\e^{m-n_0} c_i^o (\e^{n_0-m}e_i^o) \in \e^{n_0} O_{H,0}$, which implies $x_0 \in \e^{n_0} O_{H,0}$. By definition, $x_+ \in \e^{n_0} O_{H,+}$. If $n_0 \ge 0$, then $x_-=h-x_+-x_0 \in H$ and hence $x_- \in O_{H,-}$ by definition. Therefore $h \in O_H$ as desired.

Now we assume $n_0 \le -1$. We show that this would lead to a contradiction. Note that $x_- \in \e^{n_0} H$ and hence $x_- \in \e^{n_0} O_{H,-}$. Therefore $h \in \e^{n_0} O_H$.

For a basis $E'=\{e_i'; i \in I\}$ of $A_0 \cap \e^{n_0} H$, we denote by $\h_{E''}(x_0)$ the numbers of nonzero coefficients $c_i' \in \kk[[\e]] \setminus \e \kk[[\e]]$ in the expression $x_0=\sum c_i' e_i'$. Let $E=\{e_i; i \in I\}$ be a basis of $A_0 \cap \e^{n_0} H$ such that $\h_E(x_0)$ is minimal among all bases of $A_0 \cap \e^{n_0} H$. Let $\bc_i \in k$ such that $c_i-\bc_i \in \e \kk[[\e]]$ and $J=\{i \in I; \bc_i \neq 0\}$. Then $\h_E(x_0)=|J|$. We show that the elements $\bc_i$ for $i \in J$ are linear independent over $\FF_q$. Otherwise, there exists $j' \in J$ such that $\bc_{j'}=\sum_{j \in J \setminus \{j'\}} b_j \bc_j$ with each $b_j \in \FF_q$. Take $e_i^1=e_i+b_i e_{j'}$ if $i \in J \setminus \{i'\}$ and $e_i^1=e_i$ otherwise. Then $E^1=\{e_i^1; i \in I\}$ is a basis of $A_0 \cap \e^{n_0} H$ such that $\h_{E''}(x_0)=\h_E(x_0)-1$, which contradicts our choice of $E$.

We claim that $$n_0=\min\{v_H(c_i e_i), v_H(\tphi^k(x_+), v_H(\tphi^{-k-1}(x_-)); i \in I, k \in \ZZ_{\ge 0}\}.$$ Indeed, let $E''=\{e_i''; i \in I\}$ be a basis of $A_0 \cap \e^{n_0} H$ and denote by $C_{E''}(x_0)$ the ideal of $\kk[[\e]]$ generated by the coefficients $c_i'' \in \kk[[\e]]$ in the expression  $x_0=\sum_{i \in I} c_i'' e_i''$. Then \begin{align*}\min\{v_H(c_i'' e_i''); i \in I\} &= n_0+\min\{v(c_i''); i \in I\} \\ &= n_0+\min\{v(c); c \in C_{E''}(x_0)\},\end{align*} where the first equality follows from that observation that $v_H(e_i'')=n_0$ for $i \in I$. Note that $C_{E''}(x_0)$ is independent of the choice of $E''$ and that $\{\e^{n_0-m} e_i^o; i \in I\}$ is a basis of $A_0 \cap \e^{n_0} H$. We have $\min\{v_H(c_i e_i); i \in I\}=\min\{v_H(c_i^o e_i^o)=v_H(\e^{m-n_0}c_i^o(\e^{n_0-m}e_i^o)); i \in I\}$ and the claim is proved.

We fix a basis $F=\{f_l; l\in \Pi\}$ of $H$ and Let $\bar H \subset H$ be the $\kk$-vector subspace spanned $F$. Let $u_k, v_i, w_k \in \bar H$ be the unique elements such that $$-\e^{-n_0} \tphi^k(x_+)-u_k,\ \e^{-n_0} e_i-v_i,\ \e^{-n_0} \tphi^{-k-1}(x_-)-w_k \in \e H$$ for $i \in I$ and $k \in \ZZ_{\ge 0}$. Let $\a_{k,l}, \b_{i,l}, \g_{k,l} \in \kk$ be such that $u_k=\sum \a_{k,l} f_l$, $v_i=\sum \b_{i,l} f_l$ and $w_k=\sum \g_{k,l} f_l$. Note that $\a_{l,k}$ and $\b_{l,k}$ are zero for all but finitely many $k \in \ZZ_{\ge 0}$.

Let $h' =x_+' + x_0' + x_-' \in \e^{n_0} O_H$, where $x_\pm' \in \e^{n_0} O_{H,\pm}$ and $x_0'=\sum_{i \in I}d_i' e_i \in \e^{n_0} O_H$ with each $c_i' \in \kk[[\e]]$. Following \cite[Lemma 8.6]{LNV} we define $y_{h',+}=-\sum_{k=0}^\infty \tphi^k(x_+')$, $y_{h',-}=\sum_{k=0}^\infty \tphi^{-k-1}(x_-')$ and $y_{h',\th',0}=\sum_{i \in I} \th_i' e_i$, where $\th=(\th_i)_{i \in I}$ with each $\th_i' \in \kk[[\e]]$ satisfying $\s(\th_i')-\th_i'=d_i'$. Set $y_{h',\th'}=y_{h',+}+y_{h',\th',0}+y_{h',-} \in \e^{n_0}O_H$. Then we have $h'=\tphi(y_{h',\th'})-y_{h',\th'}$.

Let $r \in \kk$. Choose $\th_r=(\th_{i,r})_{i \in I}$ with each $\th_{i,r} \in \kk[[\e]]$ satisfying $\s(\th_{i,r})-\th_{i,r}=rc_i$ for $i \in I$. By assumption, $rh=(\tphi-1)(y_{rh,\th})=(\tphi-1)(g_r)$ for some $g_r \in H$. Therefore \begin{align} \label{f0} y_{rh,\th_r}-g_r \in A_0 \cap \e^{n_0}H. \end{align} Hence there exist $a_{i,r} \in \FF_q[[\e]]$ for $i \in I$ such that \begin{align} \label{f1} y_{rh,\th_r}+\sum_{i \in I} a_{i,r} e_i \in H. \end{align}

Let $z_{i,r}=\bar \th_{i,r} \in \kk$. Then $z_{i,r}^q - z_{i,r}=r \bc_i$. By (\ref{f0}) and the assumption $n_0 \le -1$, we have $$\sum_{i \in I} (z_{i,r}+\ba_{i,r}) v_i +\sum_{k=0}^\infty (r^{q^k} u_k + r^{q^{-k-1}} w_k)=0.$$ In other words, for each $l \in \Pi$, we have \begin{align}\label{key}\sum_{i \in I}\b_{i,l} (z_{i,r}+\ba_{i,r})+\sum_{k=0}^\infty (\a_{k,l} r^{q^k} + \g_{k,l} r^{q^{-k-1}})=0.\end{align} We show that there exists $l_0 \in \Pi$ such that (\ref{key}) holds for only finitely many $r \in \kk$, which is a contradiction since $\kk$ is an infinite set.

Define $J=\{i \in I; \bc_i \neq 0\}$. If $J=\emptyset$, then $z_{i,r} \in \FF_q$ for all $i \in I$. Moreover, Since $c_i e_i \in \e^{n_0+1} H$ for all $i \in I$, we see that there exists $l_0 \in F$ such that one of $\{\a_{k,l_0}, \g_{k,l_0}; k \in \ZZ_{\ge 0}\}$ is nonzero. Now (\ref{key}) becomes
\begin{align}\label{key1}
\sum_{k=0}^\infty (\a_{k,l_0} r^{q^k} + \g_{k,l_0} r^{q^{-k-1}}) \in \sum_{i \in I} \FF_q \b_{i,l_0},
\end{align}
which has only finitely many solutions for $r$ as desired.

Now we assume $J \neq \emptyset$ and fix an element $i_0 \in J$. Since $v_i \neq 0$ for each $i \in I$, there exists $l_0 \in \Pi$ such that $\b_{i_0,l_0} \neq 0$. We write $\b_i=\b_{i,l_0}$, $\a_k=\a_{k,l_0}$ and $\g_k=\g_{k,l_0}$ for $i \in I$ and $k \in \ZZ_{\ge 0}$. Take $z_i=z_{i,r}+\ba_{i,r}$ for $i \in I$. Then (\ref{key}) is equivalent to the following equations with variables $z_i$ with $i \in I$.
\begin{align}
\label{key2'}   & P_i=\bc_i\i (z_i^q-z_i)-\bc_{i_0}\i (z_{i_0}^q-z_{i_0})=0 \text{ for } i \in J \setminus \{i_0\}; \\
\label{key2''}  & P_i=z_i^q-z_i=0 \text{ for } i \in I \setminus J; \\
\label{key2'''} & P_{i_0}=\sum_{i \in I}\b_i z_i+\sum_{j=0}^\infty (\a_j (\bc_{i_0}\i (z_{i_0}^q-z_{i_0}))^{q^j} + \g_j (\bc_{i_0}\i (z_{i_0}^q-z_{i_0}))^{q^{-j-1}})=0.
\end{align}
To obtain the contradiction, it suffices to show that the above three equations have only finitely many solutions for $z_{i_0}$.

If not all of $\g_k$ with $k \in \ZZ_{\ge 0}$ are zero, let $k_0$ be the maximal integer such that $\g_{k_0} \neq 0$. In this case, replacing $P_{i_0}$ by $P_{i_0}^{q^{k_0+1}}$ in (\ref{key2'''}) yields
\begin{align*}
P_{i_0}&= \sum_{k=0}^\infty (\g_k^{q^{k_0+1}} (\bc_{i_0}\i (z_{i_0}^q-z_{i_0}))^{q^{k_0-k}}+\a_k^{q^{k_0+1}}(\bc_{i_0}\i (z_{i_0}^q-z_{i_0}))^{q^{k+k_0+1}}) \\ & + \sum_{i \in I}\b_i^{q^{k_0+1}} z_i^{q^{k_0+1}}=0.
\end{align*}
Then one computes that $$\det (\frac{\partial P_i}{\partial z_j})_{i,j \in I}= \pm \g_{k_0}^{k_0+1} \prod_{i \in J}\bc_i\i \neq 0,$$ which means there are only finitely many solutions to the equations $P_i=0$ with $i \in I$.

Now we assume $\g_k=0$ for all $k \in \ZZ_{\ge 0}$ and (\ref{key2'''}) becomes
\begin{align}\label{e2}
P_{i_0}=\sum_{i \in I}\b_i z_i+\sum_{k=0}^\infty \a_k (\bc_{i_0}\i (z_{i_0}^q-z_{i_0}))^{q^k}=0.
\end{align}

We are reduced to show there are only finitely many solutions to (\ref{key2'}), (\ref{key2''}) and (\ref{e2}). Note that $\bc_i \in \kk$ with $i \in J$ are linear independent over $\FF_q$ by our choice of $E$ and that not all $\b_i$ for $i \in I$ is zero. The statement now follows directly from Lemma \ref{teq} below.
\end{proof}

\begin{lemma}\label{teq}
Let $I$ be finite index set and $J \subset I$ a nonempty subset. Let $C \subset \kk[z]$ be the $k$-vector space spanned by $z^{q^k}$ with $k \in \ZZ_{\ge 0}$. Let $T_I$ ($T_{I \setminus J}$) be the $\kk$-vector subspace of $\kk[z_i; i \in I]$ spanned by $z_i$ with $i \in I$ ($i \in I \setminus J$). We fix $i_0 \in J$.

Let $\bc_i \in \kk$ with $i \in J$ be linear independent over $\FF_q$. Let $t \in T_I$ and $g \in C$ such that either $t \notin T_{I \setminus J}$ or $g$ is nontrivial. Then there are only finitely many solutions to the following equations in variables $z_i$ for $i \in I$.
\begin{align*}
P_i&=\bc_i\i(z_i^q-z_i)-\bc_{i_0}\i(z_{i_0}^q-z_{i_0})=0, \text{ if } i \in J \setminus \{i_0\};\\
P_i&=z_i^q-z_i=0, \text{ if } i\in I \setminus J;\\
P_{t,g}&=t+g(\bc_{i_0}\i(z_{i_0}^q-z_{i_0}))=0.
\end{align*}
\end{lemma}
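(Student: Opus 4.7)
My plan is to assume the solution set is infinite and derive a contradiction from the $\FF_q$-linear independence of $\{\bc_j; j \in J\}$. First, the equations $P_i = z_i^q - z_i = 0$ for $i \in I \setminus J$ force $z_i \in \FF_q$, giving only finitely many choices; so after fixing one such choice and absorbing $\sum_{i \in I \setminus J} t_i z_i$ into a constant $c \in \kk$, I reduce to showing that, for each such choice, the system in $(z_j)_{j \in J}$ consisting of $P_j = 0$ for $j \in J \setminus \{i_0\}$ together with $P_{t,g} = \sum_{j \in J} t_j z_j + c + g(w) = 0$ (where $w = \bc_{i_0}\i(z_{i_0}^q - z_{i_0})$) has only finitely many solutions.

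Next I would analyze the subvariety $V \subset \A{|J|}$ defined by the $|J|-1$ equations $P_j = 0$, $j \in J \setminus \{i_0\}$, alone. Their Jacobian with respect to $(z_j)_{j \in J \setminus \{i_0\}}$ is diagonal with nonzero entries $-\bc_j\i$ (using $q z^{q-1} = 0$ in characteristic $p$), so the projection $V \to \A{1}_{z_{i_0}}$ is finite étale of degree $q^{|J|-1}$ and $V$ is smooth of dimension one. To upgrade this to irreducibility, I would verify by Artin--Schreier theory that the classes of $\bc_j w$ in $\kk(z_{i_0})/\{y^q-y : y \in \kk(z_{i_0})\}$ are $\FF_q$-linearly independent for $j \in J \setminus \{i_0\}$: a relation $\sum_{j \neq i_0} \mu_j \bc_j w = y^q - y$ with $\mu_j \in \FF_q$ forces $y \in \kk[z_{i_0}]$ (by comparing poles) of degree $\le 1$, and comparing coefficients of $z_{i_0}^q$ and $z_{i_0}$ then gives $\sum_{j \neq i_0} \mu_j \bc_j \in \FF_q \cdot \bc_{i_0}$, whence the hypothesis forces all $\mu_j = 0$. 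Consequently $\kk(V)$ is a field Galois over $\kk(z_{i_0})$ of degree $q^{|J|-1}$, admitting the $\kk(z_{i_0})$-basis $\{\prod_j z_j^{\a_j}; 0 \le \a_j \le q-1\}$.

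If $V \cap \{P_{t,g} = 0\}$ were infinite, then since $V$ is irreducible of dimension one, $P_{t,g}$ would vanish identically on $V$, i.e., $P_{t,g} = 0$ in $\kk(V)$. Expanding $P_{t,g}$ against the basis above, the coefficient of $z_j$ for $j \in J \setminus \{i_0\}$ equals $t_j$ and the coefficient of $1$ equals $t_{i_0} z_{i_0} + c + g(w) \in \kk[z_{i_0}]$; vanishing of the former gives $t_j = 0$ for every $j \in J \setminus \{i_0\}$, and vanishing of the latter is a polynomial identity in $\kk[z_{i_0}]$. Writing $g(w) = \sum_k \a_k \bc_{i_0}^{-q^k}(z_{i_0}^{q^{k+1}} - z_{i_0}^{q^k})$ and comparing coefficients of $z_{i_0}^{q^m}$ for $m \ge 1$ yields the recursion $\a_m = \a_{m-1} \bc_{i_0}^{q^{m-1}(q-1)}$; because $g$ has finite support this forces $\a_0 = 0$ and hence $g = 0$, and then the coefficients of $z_{i_0}$ and $1$ give $t_{i_0} = 0$ and $c = 0$. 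Thus $t_j = 0$ for all $j \in J$ and $g = 0$, contradicting the hypothesis that either $t \notin T_{I \setminus J}$ or $g$ is nontrivial.

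The main obstacle is verifying irreducibility of $V$ through Artin--Schreier linear disjointness, since this is the step that essentially exploits the $\FF_q$-linear independence of $\{\bc_j; j \in J\}$; the remaining Jacobian computation and coefficient matching are routine once the basis of $\kk(V)$ is in hand.
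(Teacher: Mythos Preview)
Your route is genuinely different from the paper's: instead of inducting on $|J|$ with a Jacobian computation and a reduction step $(P_{i_0}/\beta_{i_0})^q - P_{i_0}/\beta_{i_0}$, you show once and for all that the curve $V$ cut out by the $P_j$ ($j\in J\setminus\{i_0\}$) is irreducible, and then read off the vanishing of $P_{t,g}$ against an explicit $\kk(z_{i_0})$-basis of $\kk(V)$. This is conceptually clean and makes the role of the $\FF_q$-linear independence of the $\bc_j$ visible in a single place.

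There is, however, a real gap in the irreducibility step when $q=p^f$ with $f>1$. You invoke ``Artin--Schreier theory'' in the form: if the classes of $\bc_j w$ are $\FF_q$-linearly independent in $K/\wp_q(K)$ (where $K=\kk(z_{i_0})$, $\wp_q(y)=y^q-y$), then the composite extension has degree $q^{|J|-1}$. That implication is false for $q\neq p$. For example, with $a=t+t^p+\cdots+t^{p^{f-1}}\in\kk(t)$ one has $a\notin\wp_q(\kk(t))$, yet $y^q-y-a$ is reducible because any root of $Y^p-Y-t$ already satisfies $y^q-y=a$. The cohomological identification $K/\wp_q(K)\cong\Hom(G_K,\FF_q)$ only tells you the characters $\chi_{a_j}$ are $\FF_q$-linearly independent; surjectivity of $(\chi_{a_j}):G_K\to\FF_q^{\,|J|-1}$ requires instead that every nonzero $\FF_q$-combination $\sum_j\mu_j\bc_j w$ lie outside $\wp_p(K)$, which is strictly stronger.

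The gap is repairable in your setting. Writing $C=\bigl(\sum_j\mu_j\bc_j\bigr)\bc_{i_0}^{-1}$, one checks directly that $C(z_{i_0}^q-z_{i_0})=y^p-y$ with $y\in\kk[z_{i_0}]$ forces (by the recursion $\gamma_{p^k}=\gamma_1^{p^k}$ on coefficients together with the top and bottom constraints $\gamma_{p^{f-1}}^p=C$, $\gamma_1=C$) that $C^{p^f}=C$, i.e.\ $C\in\FF_q$; your hypothesis on the $\bc_j$ then gives all $\mu_j=0$. With this stronger verification in place, $V$ is indeed irreducible and the rest of your argument (basis, coefficient comparison, recursion on the $\alpha_k$) goes through unchanged.
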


\begin{proof}
Assume $t=\sum_{i \in I} \b_i z_i$ and $g=\sum_{k=0}^\infty \a_k z^{q^k}$, where $\b_i, \a_k \in \kk$ and $\a_k=0$ for all but finitely many $k \in \ZZ_{\ge 0}$. We argue by induction on $|J|$. If $|J|=1$, then $J=\{i_0\}$ and $z_i^q-z_i=0$ for $i \in I \setminus \{i_0\}$. By $P_{t,g}=0$ we have
\begin{align*}
\b_{i_0} z_{i_0} + g(\bc_{i_0}\i(z_{i_0}^q-z_{i_0})) \in \sum_{i \in I \setminus \{i_0\}}\FF_q \b_i.
\end{align*}
By our assumption, either $\b_{i_0} \neq 0$ or $g$ is nontrivial, then $\b_{i_0} z_{i_0} + g(\bc_{i_0}\i(z_{i_0}^q-z_{i_0}))$ is not a constant polynomial and hence there are only finitely many solutions. Now we assume the statement holds when $1 \le |J|<s$, we show that it also holds when $|J|=s$. Let $J_1=\{i \in J; \b_i \neq 0\}$. Then $J_1 \neq \emptyset$ because $t \in T_{I \setminus J}$. If $J_1 \subsetneq J$, the statement follows from induction hypothesis by replacing $I$ and $J$ with $J_1 \cup (I \setminus J)$ and $J_1$ respectively. So we assume that $\b_i \neq 0$ for all $i \in J$. Let $P_{i_0}=P_{t,g}$. One computes that $$\pm \det (\frac{\partial P_i}{\partial z_j})_{i,j \in I}=\b_{i_0}-\bc_{i_0}\i \a_0+\sum_{i \in J \setminus \{i_0\}} \frac{\bc_i}{\bc_{i_0}}\b_j.$$ Then we can assume that $\det (\frac{\partial P_i}{\partial z_j})_{i,j \in I}=0$, that is, \begin{align}\label{e1} \a_0-\sum_{i \in J}\bc_i \b_i=0.\end{align}

By $(\frac{1}{\b_{i_0}} P_{i_0})^q-\frac{1}{\b_{i_0}} P_{i_0}=0$ we have
\begin{align*}
(z_{i_0}^q-z_{i_0})+\sum_{i \in I \setminus \{i_0\}}\frac{\b_i}{\b_{i_0}}(z_i^q-z_i)+\sum_{i \in I \setminus \{i_0\}}((\frac{\b_i}{\b_{i_0}})^q-\frac{\b_i}{\b_{i_0}}) z_i^q+(\frac{g(r)}{\b_{i_0}})^q-\frac{g(r)}{\b_{i_0}}=0,
\end{align*}
where $r=\bc_{i_0}\i(z_{i_0}^q-z_{i_0})$. By (\ref{e1}) and the relations $z_i^q-z_i=\bc_i \bc_{i_0}\i (z_{i_0}^q-z_{i_0})$ for $i \in I$ we have
\begin{align*}
\sum_{i \in I \setminus \{i_0\}}((\frac{\b_i}{\b_{i_0}})^q-\frac{1}{\b_{i_0}}) z_i^q + (\frac{g(r)}{\b_{i_0}})^q-\frac{1}{\b_{i_0}}(g-\a_0 z)(r)=0.
\end{align*}
Note that $\frac{1}{\b_{i_0}}(g-\a_0 z) \in C$ has a $q$-th root $g_1' \in C$. Let $g'=\frac{1}{\b_{i_0}}g-g_1' \in C$ and $t'=\sum_{i \in I \setminus \{i_0\}}\b_i' z_i \in T_I$, where $\b_{i}'$ is the $q$-th root of $(\frac{\b_i}{\b_{i_0}})^q-\frac{\b_i}{\b_{i_0}}$. Let $I'=I \setminus \{i_0\}$ and $J'=J \setminus \{i_0\}$. Since $|J|=s \ge 2$, $J' \neq \emptyset$. We claim that either $t' \notin T_{I' \setminus J'}$ or $g'$ is nontrivial. Assume otherwise. Then $g'$ is a zero polynomial, hence so is $g$, which implies $\a_0=0$. On the other hand, since $t' \in T_{I \setminus J}$ we have $\b_i'^q=(\frac{\b_i}{\b_{i_0}})^q-\frac{\b_i}{\b_{i_0}}=0$ for $i \in J \setminus \{i_0\}$. Hence $\frac{\b_i}{\b_{i_0}} \in \FF_q$ for all $i \in J$. Therefore by (\ref{e1}) we have $$\sum_{i \in J} \frac{\b_i}{\b_{i_0}} \bc_i=0,$$ which contradicts our assumption that $\bc_i$ with $i \in J$ are linear independent over $\FF_q$. The claim is proved.

Choose $i_0' \in J'$. Let $P_i'=\bc_i\i (z_i^q-z_i)-\bc_{i_0'}\i(z_{i_0'}^q-z_{i_0'})$ if $i \in J' \setminus \{i_0'\}$ and $P_i'=z_i^q-z_i$ if $i \in I' \setminus J'$. Let $P_{t',g'}'=t'+g'(\bc_{i_0'}\i(z_{i_0'}^q-z_{i_0'}))$. Note that $\bc_i$ with $i \in J'$ are linear independent over $\FF_q$. Applying induction hypothesis, we see that there are only finitely many solutions to $P_{t',g'}'$ and $P_i'$ with $i \in I'$. Since $\bc_{i_0}\i(z_{i_0}^q-z_{i_0})=\bc_{i_0'}\i(z_{i_0'}^q-z_{i_0'})$, hence there are only finitely many solutions to $P_{t, g}$ and $P_i$ with $i \in I$. The proof is finished.
\end{proof}


\begin{thebibliography}{GKP00}
\bibitem[LNV]{LNV}
E.~Lau, M.-H.~Nicole, A.~Vasiu, \emph{Stratications of Newton polygon strata and Traverso's conjectures for $p$-divisible groups}, arXiv:0912.0506v2.

\bibitem[N]{N}
S.~Nie, \emph{Fundamental elements of an affine Weyl group}, arXiv:1310.2229.

\bibitem[Va]{Va}
A.~Vasiu, \emph{Reconstructing $p$-divisible groups from their truncations of small level}, Comment. Math. Helv. 85 (2010) no. 1, 165--202.

\bibitem[Vi]{Vi}
E.~Viehmann, \emph{Truncation of level 1 of elements in the loop group of a reductive group}, arXiv:0907.2331.

\bibitem[X1]{X1}
X.~Xiao, \emph{Computing isomorphism numbers of $F$-crystals using level torsions}, J. Number Theory 132 (2012), no. 12, 2817--2835.

\bibitem[X2]{X2}
X.~Xiao, \emph{Subtle invariants of $F$-crystals}, arXiv:1311.4009.
\end{thebibliography}
\end{document}